\newtheorem{lemma}{Lemma}[section]
\newtheorem{theorem}{Theorem}[section]
\newtheorem{proposition}{Proposition}[section]
\newtheorem{remark}{Remark}[section]
\numberwithin{equation}{section}
\newcommand{\dis}{\displaystyle}
\newcommand{\R}{\mathbb{R}}
\renewcommand{\S}{\mathbb{S}}
\newcommand{\FP}{\mathbf{P}}
\newcommand{\FI}{\mathbf{I}}
\newcommand{\CM}{\mathcal{M}}
\newcommand{\na}{\nabla}
\newcommand{\al}{\alpha}
\newcommand{\be}{\beta}
\newcommand{\ga}{\gamma}
\newcommand{\om}{\omega}
\newcommand{\la}{\lambda}
\newcommand{\si}{\sigma}
\newcommand{\pa}{\partial}
\newcommand{\lag}{\langle}
\newcommand{\rag}{\rangle}
\newcommand{\eps}{\varepsilon}
\newcommand{\beg}{\begin{equation}}
\newcommand{\eng}{\end{equation}}
\newcommand{\bel}{\begin{equation}\label}
\newcommand{\enl}{\end{equation}}
\newcommand{\norm}[1]{\big\Vert#1\big\Vert}
\newcommand{\abs}[1]{\left\vert#1\right\vert}
\newcommand{\set}[1]{\left\{#1\right\}}
\newcommand{\inner}[1]{\left(#1\right)}
\newcommand{\comii}[1]{\left<#1\right>}
\newcommand{\reff}[1]{(\ref{#1})}
\begin{document}

\title[Hypocoercivity for the linear Boltzmann equation]{Hypocoercivity for the linear Boltzmann equation with confining forces}

\author[R.-J. Duan]{Renjun Duan}
\address[RJD]{Department of Mathematics, The Chinese University of Hong Kong,
Shatin, Hong Kong}
\email{rjduan@math.cuhk.edu.hk}

\author[W.-X. Li]{Wei-Xi Li}
\address[WXL]{School of Mathematics and Statistics, Wuhan University,
 430072 Wuhan, China}
\email{wei-xi.li@whu.edu.cn}



\begin{abstract}
This paper is concerned with the hypercoercivity property of solutions to the Cauchy problem on the linear Boltzmann equation with a confining potential force. We obtain the exponential time rate of solutions converging to the steady state under some conditions on both initial data and the potential function. Specifically, initial data is properly chosen such that the conservation laws of mass, total energy and possible partial angular momentums are satisfied for all nonnegative time, and a large class of potentials including some polynomials are allowed. The result also extends the case of parabolic forces considered in \cite{D-Hypo} to the non-parabolic general case here.
\end{abstract}

\maketitle
\thispagestyle{empty}

\tableofcontents

\section{Introduction}

Consider the following Cauchy problem on the linear Boltzmann equation with a stationary potential force
\begin{eqnarray}
 \pa_t F+\xi\cdot \na_x F-\na_x\phi\cdot \na_\xi F& =&Q(F,M),\label{eq}\\
 F(0,x,\xi) &=&F_0(x,\xi).\label{eq.0}
\end{eqnarray}
Here, $F=F(t,x,\xi)\geq 0$  stands for the density distribution function  of particles which have position
$x=(x_1,x_2,\cdots,x_n)\in\R^n$ and velocity
$\xi=(\xi_1,\xi_2,\cdots,\xi_n)\in\R^n$ at time $t\geq 0$, and the
initial data at time $t=0$ is given by $F_0=F_0(x,\xi)$. The integer $n\geq 2$ denotes the space dimension. The potential $\phi=\phi(x)$ of the external force is confining in the sense of
\begin{equation}\label{phi.c1}
    \int_{\R^n}e^{-\phi(x)}\,dx=1,
\end{equation}
and $M$ is a normalized global Maxwellian in the form of
$$
     M=(2\pi)^{-n/2}e^{-|\xi|^2/2}.
$$
$Q$ is a bilinear symmetric operator given by
\begin{equation}\label{def.Q}
Q(F,G)=\int_{\R^n\times \S^{n-1}} |\xi-\xi_\ast|^\ga q(\theta)(F'G_\ast'+F_\ast'G'-FG_\ast-F_\ast G)\,d\om d\xi_\ast.
\end{equation}
Here the usual notions $F'=F(t,x,\xi')$, $F_\ast=F(t,x,\xi_\ast)$, $F_\ast'=F(t,x,\xi_\ast')$ were used and likewise for $G$. $(\xi,\xi_\ast)$ and $(\xi',\xi_\ast')$, denoting  velocities of two particles
before and after their collisions  respectively, satisfy
\begin{eqnarray*}
\xi'=\xi-[(\xi-\xi_\ast)\cdot \om]\om,\ \
\xi_\ast'=\xi_\ast+[(\xi-\xi_\ast)\cdot \om]\om,\ \ \om\in \S^{n-1},
\end{eqnarray*}
by the conservation of momentum and energy
\begin{equation*}
   \xi+\xi_\ast=\xi'+\xi_\ast',\quad |\xi|^2+|\xi_\ast|^2=|\xi'|^2+|\xi_\ast'|^2.
\end{equation*}
The function $|\xi-\xi_\ast|^\ga q(\theta)$ in \eqref{def.Q} is the cross-section
depending only on $|\xi-\xi_\ast|$ and $\cos\theta=(\xi-\xi_\ast)\cdot \om/|\xi-\xi_\ast|$, and it is supposed to satisfy $\ga\geq 0$ and  the Grad's angular cutoff assumption $0\leq q(\theta)\leq q_0 |\cos \theta|$ for a positive constant $q_0$; see \cite{CIP-Book}.

It is well known that $Q$ has $n+2$ collision invariants $1$, $\xi_1$, $\xi_2$, $\cdots$, $\xi_n$ and $|\xi|^2$. Then, the linear Boltzmann equation \eqref{eq} has the conservation laws of mass and total energy
\begin{eqnarray}
&&\frac{d}{dt}\iint_{\R^n\times \R^n} F(t,x,\xi)\,dxd\xi=0,\label{dt.c1}\\
&&\frac{d}{dt}\iint_{\R^n\times \R^n} \left(\frac{1}{2}|\xi|^2+\phi(x)\right)F(t,x,\xi)\,dx d\xi =0,\label{dt.c2}
\end{eqnarray}
for any $t\geq 0$. The balance laws of momentum and angular momentum  are given by
\begin{eqnarray*}
&&\frac{d}{dt}\iint_{\R^n\times \R^n} \xi F(t,x,\xi)\,dxd\xi=-\int_{\R^n}\na_x\phi(x)\left[\int_{\R^n} F(t,x,\xi)\,d\xi\right]\,dx,\\
&&\frac{d}{dt}\iint_{\R^n\times\R^n}x\times \xi F(t,x,\xi)\,dxd\xi=-\int_{\R^n} x\times \na_x\phi(x)\left[\int_{\R^n} F(t,x,\xi)\,d\xi\right]\,dx.
\end{eqnarray*}
Here we used that for $i\neq j$,
\begin{equation*}
    \na_x \{(x\times \xi)_{ij}\}\cdot \xi=0,\quad \na_\xi\{ (x\times \xi)_{ij}\}\cdot \na_x\phi(x)=(x\times \na_x\phi(x))_{ij},
\end{equation*}
where for two vectors $A$ and $B$ the tensor product $A\times B$ is defined by $(A\times B)_{ij}=A_iB_j-A_jB_i$ for $i\neq j$ and zero otherwise. Since $\phi(x)$ is confining, it cannot be independent of some coordinate variable  of $(x_1,\cdots,x_n)$ so that the conservation of momentum generally does not hold. However the conservation of angular momentum is possible. For that, given $\phi(x)$, we denote an index set $S_\phi$ by
\begin{equation*}
S_\phi=\{(i,j); 1\leq i\neq j\leq n,\ x_i\pa_j\phi(x)-x_j\pa_i\phi(x)=0,\ \forall\,x\in \R^n\}.
\end{equation*}
Here and in the sequel, for brevity, $\pa_i$ denotes the spatial derivative $\pa_{x_i}$. It is now straightforward to check that \eqref{eq} also has the following conservation laws of angular momentum
\begin{equation}\label{dt.cij}
    \frac{d}{dt}\iint_{\R^n\times \R^n} (x\times \xi)_{ij} F(t,x,\xi)\,dxd\xi=0,\quad \forall\,(i,j)\in S_\phi,
\end{equation}
for any $t\geq 0$.

It is easy to see that the local Maxwellian
\begin{equation*}
    \CM=Me^{-\phi(x)}=(2\pi)^{-n/2}e^{-\left(|\xi|^2/2+\phi(x)\right)}
\end{equation*}
is a time-independent steady solution to the equation \eqref{eq}. We suppose that initial data $F_0(x,\xi)$ in \eqref{eq.0} has the same mass, angular momentum and total energy with $\CM$, namely
\begin{eqnarray}
&&\iint_{\R^n\times \R^n} [F_0(x,\xi)-\CM]\,dxd\xi=0,\label{con.F1}\\
&&\iint_{\R^n\times \R^n} (x\times \xi)_{ij} [F_0(x,\xi)-\CM]\,dxd\xi=0,\ (i,j)\in S_\phi,\label{con.F2}\\
&&\iint_{\R^n\times \R^n} \left(\frac{1}{2}|\xi|^2+\phi(x)\right)[F_0(x,\xi)-\CM]\,dx d\xi=0.\label{con.F3}
\end{eqnarray}
Due to the conservation laws \eqref{dt.c1}, \eqref{dt.cij} and \eqref{dt.c2}, the equations  above still hold true for the solution $F(t,x,\xi)$ to the Cauchy problem \eqref{eq}-\eqref{eq.0} at all positive time $t> 0$. Therefore, it could be expected that the solution $F(t,x,\xi)$  tends to the steady state $\CM$ exponentially in time as in the periodic domain \cite{Ukai-1974}. In general it is not the case for the whole space. However, we shall see that it can be achieved under some additional conditions on the potential function $\phi(x)$. Notice that the collision operator $Q$ is degenerate along $n+2$ number of directions. Even though this, the interplay between $Q$ and the linear transport operator $\pa_t+\xi\cdot \na_x-\na_x\phi\cdot \na_\xi$ involved in a confining force can indeed yield the  convergence of solutions to the steady state $\CM$ in an exponential rate. The property is now called hypercoercivity; see \cite{Vi}.

To the end, set
\begin{equation*}
    f(t,x,\xi)=\frac{F(t,x,\xi)-\CM}{\CM^{1/2}},\quad f_0(x,\xi)=\frac{F_0(x,\xi)-\CM}{\CM^{1/2}}.
\end{equation*}
Then the Cauchy problem  \eqref{eq}-\eqref{eq.0} is reformulated as
\begin{eqnarray}
 \pa_t f+\xi\cdot \na_x f-\na_x\phi\cdot \na_\xi f+Lf  = 0,  \quad
 f(0,x,\xi) =f_0(x,\xi), \label{req}
\end{eqnarray}
where $L$ is the usual linearized self-adjoint operator, defined by
\begin{equation*}
    Lf=-\frac{1}{M^{1/2}} Q(M^{1/2}f,M).
\end{equation*}
Recall that the null space  of $L$ is given by $\ker L=\{M^{1/2}, \xi_iM^{1/2},1\leq i\leq n, |\xi|^2M^{1/2}\}$, $L=-\nu+ K$ with $\nu=\nu(\xi)\sim (1+|\xi|)^\ga$ and $Ku=\int_{\R^3}K(\xi,\xi_\ast)u(\xi_\ast)\,d\xi_\ast$ for a real symmetric integral kernel $K(\xi,\xi_\ast)$, and moreover, $L$ satisfies the
coercivity  estimate:
 \[
\int_{\R^n} f Lf\,d\xi\geq \la_0\int_{\R^n}\nu(\xi) |\{\FI-\FP\} f|^2\,d\xi
\]
for a constant $\la_0>0$, where ${\bf I}$ is the identity operator and  ${\bf P} $ stands for the projection operator on $\ker L$
in $L_\xi^2$.

The main result of the paper concerning the hypercoercivity property of solutions to the Cauchy problem \eqref{req} is stated  as follows.

\begin{theorem}\label{thm.rate}
Assume that $\phi(x)$ is a confining potential in the sense of \eqref{phi.c1} and $F_0=\CM+\CM^{1/2}f_0$ satisfies the conservation laws \eqref{con.F1}, \eqref{con.F2} and \eqref{con.F3}. Additionally let the following assumptions on $\phi(x)$ hold:

\medskip

\noindent (i) For any $(i,j)\in S_\phi$, $\phi(x)$ is even in $(x_i,x_j)$, i.e.,
\begin{equation*}
\phi(\cdots,-x_i,\cdots,-x_j,\cdots)=\phi(\cdots,x_i,\cdots,x_j,\cdots),\quad x\in \R^n.
\end{equation*}

\noindent (ii) The functions $1$, $\{x_i\}_{1\leq i\leq n}$, $\{\pa_i\phi(x)\}_{1\leq i\leq n}$ and $\{ x_i\pa_j\phi(x)-x_j\pa_i\phi(x); 1\leq i<j\leq n, (i,j)\in I\setminus S_\phi\}$, where $I=\{(i,j);1\leq i\neq j\leq n\}$, are independent in the sense that if there are constants $b_0$, $\{b_{1i}\}_{1\leq i\leq n }$, $\{B_{1i}\}_{1\leq i\leq n}$ and $\{B_{2ij}; 1\leq i<j\leq n, (i,j)\in I\setminus S_\phi\}$ such that
\begin{equation*}
b_0+\sum_{i=1}^n b_{1i}x_i+\sum_{i=1}^n B_{1i}\pa_i\phi(x) +\sum_{1\leq i<j\leq n, (i,j)\in I\setminus S_\phi} B_{2ij}[x_i\pa_j\phi(x)-x_j\pa_i\phi(x)]=0
\end{equation*}
for any $x\in \R^n$ then all coefficients must be zero, i.e.,
\begin{equation*}
b_0=0;\ b_{1i}=0, 1 \leq i\leq n;\ B_{1i}=0, 1\leq i\leq n;\ B_{2ij}=0, 1\leq i<j\leq n, (i,j)\in I\setminus S_\phi.
\end{equation*}

\noindent (iii) If $\phi(x)$ is even, i.e., $\phi(-x)=\phi(x)$ for any
$x\in \R^n$, then the function
\begin{equation*}
\Lambda_\phi \Big(2\phi(x)+x\cdot \nabla_x \phi (x)\Big)+\frac{\abs{x}^2
    }{2}
\end{equation*}
with  $\Lambda_\phi$  a constant to be given in the later proof, is
not constant-valued; if $\phi$ is a general potential which may not be even, then
there exists a sequence $\set{x^\ell}_{\ell\geq 1}$,  with $\abs{x^\ell}\rightarrow
   +\infty $ as $\ell\rightarrow +\infty$,  such that
  \beg \label{hy1}
    \lim_{\ell\rightarrow +\infty} \frac{2\phi(x^\ell)+x^\ell\cdot
      \nabla_x \phi (x^\ell)+\abs{\nabla_x \phi (x^\ell)}}{\abs{x^\ell}^2}=0\ \ {\rm or}\ \ \lim_{\ell\rightarrow +\infty}\frac{
      \abs{x^\ell}^2+\abs{\nabla_x \phi (x^\ell)}}{2\phi(x^\ell)+x^\ell\cdot \nabla_x \phi (x^\ell)}=0.
   \eng

\medskip

\noindent
Then, for any  solution $f(t,x,\xi)\in L^2\inner{(0,1);
    L^2(\mathbb R^{2n})}$ to the Cauchy problem \eqref{req}, there are constants $\si>0$, $C$ such that
\begin{equation}\label{thm.r1}
    \|f(t)\|_{L^2_{x,\xi}}\leq C e^{-\si t}\|f_0\|_{L^2_{x,\xi}}
\end{equation}
for any $t\geq 0$.

\end{theorem}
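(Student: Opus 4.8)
The plan is to run an $L^2$ hypocoercivity scheme of macroscopic--microscopic type: the collision operator $L$ dissipates only the microscopic component $(\FI-\FP)f$, so the entire task is to recover dissipation of the hydrodynamic part $\FP f$ through the transport operator and then to rule out the possibility that those moments oscillate forever — which is exactly what the structural assumptions (i)--(iii) on $\phi$, together with the conservation laws \eqref{con.F1}--\eqref{con.F3}, are designed to prevent. The first, elementary, step is the basic energy identity: since $\xi\cdot\nabla_x-\nabla_x\phi\cdot\nabla_\xi$ is skew-adjoint on $L^2_{x,\xi}$, pairing \eqref{req} with $f$ and using the coercivity of $L$ yields
\[
\tfrac12\,\frac{d}{dt}\,\|f(t)\|_{L^2_{x,\xi}}^2=-\langle Lf,f\rangle\le-\la_0\,\|\nu^{1/2}(\FI-\FP)f\|_{L^2_{x,\xi}}^2 ,
\]
which controls the microscopic part only, because $L$ is degenerate on $\ker L$.

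Next I would set up the macroscopic decomposition $f=\FP f+(\FI-\FP)f$, writing $\FP f=\big(a(t,x)+b(t,x)\cdot\xi+c(t,x)(|\xi|^2-n)\big)M^{1/2}$, and project \eqref{req} onto the basis $\{M^{1/2},\,\xi_iM^{1/2},\,(|\xi|^2-n)M^{1/2}\}$ of $\ker L$ to obtain the local balance laws for $(a,b,c)$: a first-order system whose source terms are first velocity moments of $(\FI-\FP)f$ plus the forcing terms carrying $\nabla_x\phi$. Because the conserved quantities in \eqref{con.F1}--\eqref{con.F3} are preserved for all time, the triple $(a,b,c)$ stays, at every time, orthogonal — in the weighted sense inherited from those identities — to the finite-dimensional space of hydrodynamic fields attached to mass, total energy and the conserved angular momenta $(i,j)\in S_\phi$.

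The heart of the proof is then a macroscopic coercivity estimate. From the local balance laws I would construct an interaction (compensating) functional $\mathcal I(t)$, schematically a small multiple of a sum of pairings of spatial derivatives — or low velocity moments — of $\FP f$ against $(\FI-\FP)f$, supplemented by carefully weighted couplings with $\phi$; this is where the constant $\Lambda_\phi$ enters, as the weight of a virial-type term built from $x\cdot\xi$ and $|x|^2$. After absorbing the resulting error terms into $\|\nu^{1/2}(\FI-\FP)f\|^2$, one aims at
\[
\|a(t)\|^2+\|b(t)\|^2+\|c(t)\|^2\;\lesssim\;-\frac{d}{dt}\mathcal I(t)+\|(\FI-\FP)f(t)\|_{L^2_{x,\xi}}^2
\]
modulo the conserved hydrodynamic subspace, which the conservation laws then annihilate. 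This splits into two sub-tasks: a weighted Poincar\'e (spectral-gap) inequality for the measure $e^{-\phi}\,dx$, needed to pass from $\nabla_x(a,b,c)$ to $(a,b,c)$; and — the genuinely delicate point — showing that no nonzero hydrodynamic field compatible with the conservation laws lies in the kernel of the transport operator restricted to $\ker L$, i.e.\ that the linearized fluid system supports no undamped collective mode. Hypothesis (i) (evenness of $\phi$ along each angular-momentum pair) makes the rotational directions decouple cleanly; hypothesis (ii) (linear independence of $1$, $x_i$, $\partial_i\phi$ and the non-conserved generators $x_i\partial_j\phi-x_j\partial_i\phi$) rules out the spurious affine and rotational directions; and hypothesis (iii), through the function $\Lambda_\phi(2\phi+x\cdot\nabla_x\phi)+|x|^2/2$ and the asymptotic alternative \eqref{hy1}, is tailored precisely to exclude the resonant, quasi-parabolic obstruction, namely the would-be undamped breathing mode that genuinely occurs when $\phi$ is exactly quadratic.

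Finally I would close the estimate: with $\mathcal E(t)=\|f(t)\|_{L^2_{x,\xi}}^2+\mathcal I(t)$ and the coupling constant chosen small enough that $\mathcal E(t)\simeq\|f(t)\|_{L^2_{x,\xi}}^2$, combining the basic energy identity with the macroscopic coercivity estimate gives $\frac{d}{dt}\mathcal E(t)\le-2\sigma\mathcal E(t)$ for some $\sigma>0$, and Gronwall's inequality yields \eqref{thm.r1}. I expect the main obstacle to be exactly the macroscopic coercivity step for a general, possibly non-even potential: establishing the weighted Poincar\'e inequality in the form required and, above all, translating hypotheses (i)--(iii) into the statement that the conserved hydrodynamic subspace contains no non-decaying direction; by contrast, the $L^2$ energy estimates and the absorption of the numerous error terms carrying $\nu$-weights and $\nabla_x\phi$ are routine, if bookkeeping-heavy.
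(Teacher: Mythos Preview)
Your outline follows the direct Lyapunov-functional route of \cite{D-Hypo}: build a correction term $\mathcal I(t)$ from macro--micro cross pairings, use a weighted Poincar\'e inequality for $e^{-\phi}\,dx$ to pass from $\nabla_x(a,b,c)$ to $(a,b,c)$, and close a differential inequality for $\mathcal E=\|f\|^2+\mathcal I$. The paper takes a genuinely different path: it never constructs $\mathcal I(t)$ and never invokes Poincar\'e. Instead it first proves (Lemma~\ref{lem.z}) that the only $L^2$ function of the form $z=(a+b\cdot\xi+c|\xi|^2)\CM^{1/2}$ solving the free transport equation and satisfying the conservation laws \eqref{lem.z.c1}--\eqref{lem.z.c3} is $z\equiv0$; hypotheses (i)--(iii) are used \emph{only} here, via an explicit ODE analysis of the moment system \eqref{eq1.0}--\eqref{eq5}. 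Then a contradiction/compactness argument (Proposition~\ref{prp.co}, in the style of \cite{Guo2,Kim}) upgrades this rigidity to a time-integrated coercivity $\int_0^1\langle Lf,f\rangle\,ds\ge C\int_0^1\|f\|_\nu^2\,ds$, and slicing $[0,\infty)$ into unit intervals yields \eqref{thm.r1}.

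Your plan, as written, has a real obstruction rather than merely a different flavor. The weighted Poincar\'e inequality you invoke is \emph{not} among the hypotheses, and conditions (i)--(iii) do not imply it: a confining $\phi$ with sub-polynomial growth (say $\phi\sim(\log\langle x\rangle)^2$) can satisfy \eqref{phi.c1} and the first alternative in \eqref{hy1} yet fail the spectral gap for $e^{-\phi}\,dx$. Without Poincar\'e the step ``pass from $\nabla_x(a,b,c)$ to $(a,b,c)$'' collapses, and with it the macroscopic coercivity. Relatedly, the interaction functional you sketch involves spatial derivatives of $(a,b,c)$ and hence naturally closes only in a weighted $H^1_{x,\xi}$ norm (this is exactly what happened in \cite{D-Hypo}, cf.\ Remark~\ref{thm.rate}.1 of the paper), whereas the theorem asserts decay in bare $L^2_{x,\xi}$. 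Your diagnosis of what (i)--(iii) are \emph{for} --- killing the undamped hydrodynamic modes, including the quasi-parabolic breathing mode --- is exactly right; but the efficient way to cash that in at this level of generality is the contradiction argument, which trades the quantitative Poincar\'e/Korn machinery for weak $L^2$ compactness plus the qualitative rigidity Lemma~\ref{lem.z}.
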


A large class of potential functions satisfying conditions in Theorem \ref{thm.rate} can be allowed by just considering the polynomials in the form of
$$
P_{N}(x)=\sum_{|\al|\leq N} C_\al x^\al,
$$
where $\al=(\al_1,\cdots,\al_n)$ is a multi-index, $x^\al=x_1^{\al_1}\cdots x_n^{\al_n}$ and $C_\al$ are constants. Thus, the conditions on $\phi(x)=P_N(x)$ above are equivalent with ones by the proper choice of both $\al$ and $C_\al$. In what follows we shall give some simple examples. The first one is the radial function
$$
\phi_1(x)=\be\lag x \rag^{\al}+C_{\al\be},
$$
where $\lag x \rag:=(1+|x|^2)^{1/2}$, $\be>0$ and $0<\al\neq 2$ are constants, and $C_{\al\be}$ is a constant chosen such that the integral in \eqref{phi.c1} can be normalized to be unit. Notice that as long as the potential $\phi(x)$ is radial, $x\times \na_x\phi(x)\equiv 0$ and thus $S_\phi=\{(i,j); 1\leq i\neq j\leq n\}$, which implies that all angular momentums are conservative due to \eqref{dt.cij}.

\begin{remark}
The hypercoercivity property for $\phi=\phi_1(x)$ in the case when $\al=2$ was discussed in \cite{D-Hypo}, where the proof  is based on the approach of constructing the Lyapunov functional through  the analysis of the macroscopic system with the help of the macro-micro decomposition as well as the investigation of the Korn-type inequalities, and  more conservative quantities were assumed and the weighted $H^1_{x,\xi}$ norm instead of $L^2_{x,\xi}$ was used.
\end{remark}

The second example is the following non-radial but even function
$$
\phi_2(x)=\sum_{i=1}^n\be_i\lag x_i\rag^{\al_i}+C_{\al\be},
$$
where $0<\al_i\neq 2$ and $\be_i>0$ for $1\leq i\leq n$, and $C_{\al\be}$ depending on $\al=(\al_1,\cdots,\al_n)$ and $\be=(\be_1,\cdots,\be_n)$ is chosen as in $\phi_1(x)$. Notice that different from the case of $\phi_1(x)$, it is easy to see that $S_{\phi_2}$ is empty. For the non-radial and non-even potential, a typical example is given by
$$
\phi_3(x)=\sum_{i=1}^n\al_i x_i^{2(i+1)}+\sum_{i=1}^n\be_i x_i+C_{\al\be}
$$
for $\al_i>0$, $\be_i\neq 0$, $1\leq i\leq n$. One can also verify that $S_{\phi_3}$ is empty so that condition (i) in Theorem \ref{thm.rate} is automatically satisfied. Condition (ii) can be justified by comparing the coefficients of each monomial of different orders, and the second limit in \eqref{hy1} in condition (iii) also holds since the dominator grows strictly faster than the numerator at infinity. All details of verification are omitted for brevity.

\begin{remark}
From the later proof of Theorem \ref{thm.rate}, particularly basing on the proof of Lemma \ref{lem.z}, the conclusion \eqref{thm.r1} is still true if both conditions (ii) and (iii) in Theorem \ref{thm.rate} are combined together and replaced by the following new condition (ii)': If there are constants $b_0$, $\{b_{1i}\}_{1\leq i\leq n }$, $b_2$, $\{B_{1i}\}_{1\leq i\leq n}$, $\{B_{2ij}\}_{(i,j)\in \tilde{S}_\phi}$ and $B_3$ such that
\begin{multline*}
b_0+\sum_{i=1}^n b_{1i}x_i+b_2|x|^2+\sum_{i=1}^n B_{1i}\pa_i\phi(x) +\sum_{(i,j)\in \tilde{S}_\phi} B_{2ij}[x_i\pa_j\phi(x)-x_j\pa_i\phi(x)]\\
+B_3[2\phi(x)+x\cdot\na_x\phi(x)]=0
\end{multline*}
for any $x\in \R^n$ then all coefficients must be zero, i.e.,
\begin{equation*}
b_0=b_{1i}=b_2=B_{1i}=B_3=0, 1\leq i\leq n;\ B_{2ij}=0, (i,j)\in \tilde{S}_\phi.
\end{equation*}
Here, $\tilde{S}_\phi=\{1\leq i<j\leq n, (i,j)\in I\setminus S_\phi\}$.
\end{remark}

In what follows we mention some known work related to the result and approach of the paper. When there is no external force, the solution to the linearized Boltzmann equation \eqref{req} decays with the polynomial time rate due to \cite{Ukai-1974} using the spectral analysis. The polynomial rate was also obtained in \cite{Ka-BE13} through the compensation function method. Notice that \cite{D-Hypo} generalized this method to the case in the presence of a harmonic confining potential force. When the whole space is changed to the periodic domain, the approach of \cite{Ukai-1974} and \cite{Ka-BE13} still works under the additional assumptions on the conservation laws and then yields the exponential time-decay rate of solutions. See also \cite{DV,Guo-bd,Guo2,SG} for the study of the nonlinear Boltzmann equation. If a potential force is further added, the exponential rate was obtained in \cite{Duan-Torus} under some symmetry conditions on both the potential and initial data, and it was later extent in \cite{Kim} to the case without any symmetry restriction on the basis of the $L^\infty$-$L^2$ estimate developed in \cite{Guo-bd}.

On the other hand, the abstract theory in the context of hypercoercivity has been also studied in \cite{MN}, \cite{DMS} and \cite{Vi}, with general applications to the Fokker-Planck equation and the relaxation Boltzmann equation with the confining force in the whole space. Here we also mention the results about the exponential rate  through the hypoelliptic theory
developed in  \cite{HN} and \cite{H-AA,H-JFA}.

We conclude this section with explaining the idea in the proof of Theorem \ref{thm.rate}. In fact, in order prove Theorem \ref{thm.rate}, we need to obtain a criterion of zero solutions to the transport equation satisfying extra conservation laws; see Lemma \ref{lem.z}. Indeed, those conditions postulated in Theorem \ref{thm.rate} on the confining potential can assure the uniqueness of zero solutions. The proof of Lemma \ref{lem.z} is based on the analysis of the macroscopic equations \eqref{eq1.0}-\eqref{eq5} under the constraints \eqref{con1}-\eqref{con2}. Once the crucial Lemma \ref{lem.z} is established, it is a standard procedure as in \cite{Guo-bd} and \cite{Kim} to prove the refined coercivity estimate \eqref{coer} and hence derive the exponential time-decay rate in $L^2_{x,\xi}$.

\section{Criterion for zero solution}

This section is devoted to the proof of the following

\begin{lemma}\label{lem.z}
Let all conditions on $\phi(x)$ in Theorem \ref{thm.rate} hold. Then,
any function $z=z(t,x,\xi)\in L^2_t((0,1);L^2_{x,\xi})$ in the form of
\begin{equation}\label{lem.z.1}
    z(t,x,\xi)=\{a(t,x)+b(t,x)\cdot \xi+c(t,x)|\xi|^2\}\CM^{1/2}
\end{equation}
that satisfies in the sense of distribution the transport equation
\begin{equation}\label{lem.z.2}
    \pa_t z+\xi\cdot\na_x z-\na_x\phi\cdot \na_\xi z=0
\end{equation}
with the conservation laws
\begin{eqnarray}
&&\iint_{\R^n\times\R^n} z(t,x,\xi) \CM^{1/2}\,dxd\xi=0,\label{lem.z.c1}\\
&&\iint_{\R^n\times\R^n} (x\times \xi)_{ij} z(t,x,\xi) \CM^{1/2}\,dxd\xi=0,\ (i,j)\in S_\phi,\label{lem.z.c2}\\
&&\iint_{\R^n\times\R^n} \left(\frac{1}{2}|\xi|^2+\phi(x)\right)z(t,x,\xi) \CM^{1/2}\,dxd\xi=0,\label{lem.z.c3}
\end{eqnarray}
 for any $0\leq t\leq 1$ must be identical to zero, that is,
\begin{equation}\label{lem.z.de}
    z(t,x,\xi)\equiv 0,\quad \forall\, (t,x,\xi)\in [0,1]\times \R^n\times \R^n.
\end{equation}
\end{lemma}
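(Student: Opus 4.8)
The plan is to substitute the polynomial-in-$\xi$ ansatz \eqref{lem.z.1} into the transport equation \eqref{lem.z.2} and derive the macroscopic system governing the coefficients $a$, $b$, $c$. Since $\CM^{1/2}=(2\pi)^{-n/4}e^{-|\xi|^2/4-\phi(x)/2}$, one has $\na_x\CM^{1/2}=-\tfrac12\na_x\phi\,\CM^{1/2}$ and $\na_\xi\CM^{1/2}=-\tfrac12\xi\,\CM^{1/2}$, so that $(\xi\cdot\na_x-\na_x\phi\cdot\na_\xi)\CM^{1/2}=0$ as expected. Hence the equation reduces to $(\pa_t+\xi\cdot\na_x-\na_x\phi\cdot\na_\xi)(a+b\cdot\xi+c|\xi|^2)=0$, and collecting coefficients of the monomials $1,\xi_i,\xi_i\xi_j\ (i\ne j),\xi_i^2$ yields a closed first-order system for $(a,b,c)$; I expect it to take the schematic form
\begin{align*}
&\pa_t a-\na_x\phi\cdot b=0,\quad \pa_i c=0,\quad \pa_i b_j+\pa_j b_i=0\ (i\ne j),\\
&\na_x a+\pa_t b-2c\,\na_x\phi=0,\quad \pa_i b_i+\pa_t c=0,
\end{align*}
which is precisely the system \eqref{eq1.0}--\eqref{eq5} referenced in the introduction (the constraints \eqref{con1}--\eqref{con2} being the conservation laws \eqref{lem.z.c1}--\eqref{lem.z.c3}). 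From $\pa_i c=0$ for all $i$ we get $c=c(t)$; the Killing-type relations $\pa_i b_j+\pa_j b_i=0$ together with $\pa_i b_i=-\pa_t c$ force $b$ to be affine in $x$ of the special form $b(t,x)=-\dot c(t)x+\om(t)x+d(t)$ where $\om(t)$ is skew-symmetric and $d(t)\in\R^n$ (the classical conformal-Killing classification in $\R^n$), and then $\na_x a$ is determined, giving $a$ as a quadratic-plus-$\phi$ expression in $x$.

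Next I would feed these forms back into the two equations that still involve $\phi$ nontrivially, namely $\pa_t a=\na_x\phi\cdot b$ and $\na_x a+\pa_t b=2c\na_x\phi$. Writing out $a$ and $b$ explicitly and using that $\phi$ (via $\na_x\phi$, $x\cdot\na_x\phi$, $2\phi+x\cdot\na_x\phi$) cannot satisfy a nontrivial linear relation with $1$, $x_i$, $|x|^2$ and the off-diagonal angular combinations $x_i\pa_j\phi-x_j\pa_i\phi$, I would extract, coefficient by coefficient, that all the time-dependent coefficient functions are constant and in fact vanish — except possibly those "angular" modes $\om_{ij}$ with $(i,j)\in S_\phi$, for which $x_i\pa_j\phi-x_j\pa_i\phi\equiv 0$ and hence no contradiction arises from independence alone. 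This is exactly where the three hypotheses of Theorem \ref{thm.rate} enter: hypothesis (ii) kills every mode not protected by $S_\phi$; the conservation law \eqref{lem.z.c2} over $(i,j)\in S_\phi$ together with the evenness hypothesis (i) kills the remaining angular modes (the evenness makes $\CM$ even in $(x_i,x_j)$, so the surviving $(x\times\xi)_{ij}$-moment of $z$ is a nonzero multiple of $\om_{ij}(t)$ times a positive Gaussian integral, forcing $\om_{ij}\equiv 0$); and hypothesis (iii) disposes of the one borderline degree of freedom tied to the dilation/energy mode $2\phi+x\cdot\na_x\phi$ plus $|x|^2$, where the independence in (ii) is not by itself enough and one needs the growth/oscillation condition \eqref{hy1} (or non-constancy of $\Lambda_\phi(2\phi+x\cdot\na_x\phi)+|x|^2/2$ in the even case) together with the energy conservation law \eqref{lem.z.c3} to conclude $c\equiv 0$ and $a\equiv 0$.

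The main obstacle I anticipate is precisely this last step: showing that the "energy mode" coefficient $c(t)$ (and the accompanying constant in $a$) must vanish. Unlike the other modes, the relation produced here is of the shape (constant)$\cdot(2\phi+x\cdot\na_x\phi)+$(constant)$\cdot|x|^2+\cdots=$const, which hypothesis (ii) alone does not contradict — $|x|^2$ and $2\phi+x\cdot\na_x\phi$ are exactly the two quantities deliberately omitted from the independence list in (ii). One must therefore combine the structural ODE for $c(t)$ coming from the macroscopic system with the conservation law \eqref{lem.z.c3} (the total-energy constraint) and the specific behavior of $\phi$ encoded in (iii): either $\phi$ is even and the candidate invariant function is non-constant, giving an immediate contradiction after integrating against the Gaussian, or one exploits the sequence $x^\ell\to\infty$ and one of the two limits in \eqref{hy1} to show that no nonzero $c$ can make $a$ grow slowly enough to keep $z\in L^2_{x,\xi}$ (the $L^2$ integrability of $z$ against $\CM^{-1/2}$-weighted Gaussian is what ultimately rules out the unbounded polynomial growth). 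Carefully bookkeeping the interaction of the conformal-Killing constant $\Lambda_\phi$ (to be fixed in this proof), the conservation laws, and the two alternatives in \eqref{hy1} is the delicate part; once $c\equiv 0$ and hence $b\equiv d(t)+\om(t)x$ with $\om_{ij}$ only possibly nonzero on $S_\phi$, the earlier arguments close everything and \eqref{lem.z.de} follows.
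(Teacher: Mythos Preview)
Your overall architecture matches the paper's proof closely: derive the macroscopic system, reduce to $c=c(t)$, $b(t,x)=-\dot c(t)x+B_2 x+B_1(t)$ with $B_2$ constant and skew-symmetric, write $a(t,x)=2c(t)\phi(x)+\tfrac12\ddot c(t)|x|^2-\dot B_1(t)\cdot x+d(t)$, then use the conservation laws \eqref{lem.z.c1}, \eqref{lem.z.c3} to obtain an ODE $c(t)=\Lambda_\phi\ddot c(t)+V_\phi\cdot\dot B_1(t)$, and finally use (i), (ii), (iii) to kill everything. The role you assign to (i) and (ii) is exactly right.

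There is, however, a genuine gap in how you intend to use hypothesis (iii). You propose that in the non-even case the sequence $x^\ell$ is used to show that a nonzero $c$ would force $a$ to grow too fast for $z\in L^2_{x,\xi}$. This does \emph{not} work: since $\phi$ is confining, $e^{-\phi}$ decays faster than any polynomial, so $\int_{\R^n}(\phi(x))^2 e^{-\phi}\,dx$ and $\int_{\R^n}|x|^4 e^{-\phi}\,dx$ are perfectly finite, and the expression $a(t,x)=2c(t)\phi(x)+\tfrac12\ddot c(t)|x|^2+\cdots$ lies in $L^2(e^{-\phi}dx)$ regardless of whether $c$ vanishes. Integrability gives no obstruction.

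The correct mechanism is pointwise and purely algebraic. Substituting the forms of $a$ and $b$ into $\dot a=\na_x\phi\cdot b$ yields an identity valid for \emph{all} $x\in\R^n$:
\[
\dot c(t)\bigl(2\phi(x)+x\cdot\na_x\phi(x)\bigr)+\tfrac12 c^{(3)}(t)|x|^2=\na_x\phi(x)\cdot B_2 x+\na_x\phi(x)\cdot B_1(t)+\ddot B_1(t)\cdot x-\dot d(t).
\]
Differentiating once in $t$ removes the constant matrix $B_2$; one then evaluates the resulting identity at $x=x^\ell$ and divides through by $|x^\ell|^2$ (respectively by $2\phi(x^\ell)+x^\ell\cdot\na_x\phi(x^\ell)$). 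The first (respectively second) limit in \eqref{hy1} makes all terms vanish except $\tfrac12 c^{(4)}(t)$ (respectively $\ddot c(t)$), forcing $c^{(4)}\equiv 0$ or $\ddot c\equiv 0$. Combined with the ODE $c=\Lambda_\phi\ddot c+V_\phi\cdot\dot B_1$ and condition (ii), this gives $c\equiv 0$. The even case is similar: after using the ODE $c=\Lambda_\phi\ddot c$ (here $V_\phi=0$), the identity collapses to $\bigl(\Lambda_\phi(2\phi+x\cdot\na_x\phi)+\tfrac12|x|^2\bigr)c^{(4)}(t)+\ddot d(t)=0$, and non-constancy of the bracketed function in $x$ forces $c^{(4)}\equiv 0$ --- again a pointwise argument, not an integrated one.
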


We now start the proof of Lemma \ref{lem.z} by six steps.

\medskip

\noindent {\bf Step 1. }~
As in \cite{Guo2,Guo-IUMJ}, by plugging \eqref{lem.z.1} into \eqref{lem.z.2} and then collecting the coefficients of moments of different orders, $a(t,x)$, $b(t,x)$ and $c(t,x)$ satisfy a system of equations
\begin{eqnarray}
&&\dot{a}-\na_x\phi \cdot b=0,\label{eq1.0}\\
&&\dot{b}+\na_x a -2c \na_x\phi=0,\label{eq2}\\
&&\dot{c}+\pa_ib_i=0,\quad 1\leq i\leq n,\label{eq3}\\
&&\pa_jb_i+\pa_ib_j=0,\quad 1\leq i\neq j\leq n,\label{eq4}\\
&&\na_x c=0,\label{eq5}
\end{eqnarray}
for any $0\leq t\leq 1$ and $x\in \R^n$. Here and hereafter, for brevity, $\dot{a}$ denotes the time derivative of $a(t,x)$, and likewise for others. Moreover, putting \eqref{lem.z.1} into \eqref{lem.z.c1}, \eqref{lem.z.c2} and \eqref{lem.z.c3} gives
\begin{eqnarray}
&&\int_{\R^n} (a+A_1c)\,e^{-\phi}\,dx=0,\label{con1}\\
&&\int_{\R^n} (x\times b)_{ij}\,e^{-\phi}\,dx=0,\quad (i,j)\in S_\phi,\label{con.b}\\
&&\int_{\R^n}  \left(\frac{A_1}{2} a+\frac{A_2}{2} c\right)\,e^{-\phi}\,dx+\int (a+A_1 c)\,\phi e^{-\phi}\,dx=0,\label{con2}
\end{eqnarray}
for any $0\leq t\leq 1$, where the constants $A_1$ and $A_2$ depending only on the dimension $n$ are given by
\begin{eqnarray*}
&\dis A_1=\int_{\R^n} |\xi|^2M\,d\xi,\quad A_2=\int_{\R^n} |\xi|^4M\,d\xi.
\end{eqnarray*}

\medskip
\noindent {\bf Step 2.}~
First of all we determine $c(t,x)$. It is easy to see from \eqref{eq5} that
\begin{equation}\label{form.c}
 c(t,x)=c(t),
\end{equation}
i.e., $c$ is independent of $x$.  Next, we turn to $b(t,x)$. We shall use the same idea as in \cite{D-Hypo} for the proof of Korn-type inequality; see \cite[Step 4, Theorem 5.1]{D-Hypo}. Then, in terms of \eqref{eq3} and \eqref{eq4}, one has
\begin{equation}\label{form.b.0}
    b(t,x)=-\dot{c}(t)x+B_2(t)x +B_1(t),
\end{equation}
where $B_2(t)=\inner{B_{2ij}(t)}_{1\leq i,j\leq n}$ is  an $n\times
n$ skew-symmetric matrix and $B_1(t)=\inner{B_{11}(t),\cdots, B_{1n}(t)}$
is a vector,  both  depending only on $t$.  Plugging  the
expression \eqref{form.b.0} of $b(t,x)$ into \eqref{eq2}, it holds that
\begin{equation*}
    -\ddot{c}(t)x+\dot{B_2}(t) x+\dot{B_1}(t)+\na_x \{a(t,x)-2c(t)\phi(x)\}=0,
\end{equation*}
or equivalently
\begin{equation}\label{eq-p1}
    \dot{B_2}(t) x+\na_x
    \set{a(t,x)-2c(t)\phi(x)-\frac{1}{2} \ddot {c}(t)\abs x^2+\dot{B_1}(t)
      \cdot x }=0.
\end{equation}
This implies  that $\na_x\times \{\dot{B}_2(t) x\}=0$, i.e.,
\[
  \dot  B_{2ij}(t)-\dot B_{2ji}(t)=0,\quad 1\leq i\neq j\leq n.
\]
On the other hand, the skew-symmetry of the matrix $B_2(t)$ shows $
B_{2ij}(t)+ B_{2ji}(t)=0$ and thus
\[
\dot  B_{2ij}(t)+\dot B_{2ji}(t)=0.
\]
Then  $\dot  B_{2ij}(t)=0$ for any $t$ and all $i, j$. We have shown
\begin{equation}\label{11110302}
     \dot B_2(t)=0,
\end{equation}
that is, $B_2(t)=B_2$ is a skew-symmetric matrix with constant entries. As a
result, we may rewrite $a(t,x)$ and $b(t,x)$ as
\begin{equation}\label{form.a}
     a(t,x)=2c(t)\phi(x)+\frac{1}{2} \ddot {c}(t)\abs x^2-\dot{B_1}(t)
      \cdot x +d(t),
\end{equation}
and
\begin{equation}\label{+form.b}
  b(t,x)=-\dot c(t)x+B_2x+B_1(t),
\end{equation}
due to \reff{eq-p1} and \reff{11110302}, where $d(t)$ is a function
depending only on $t.$

\medskip
\noindent {\bf Step 3.}~
Plugging the expressions  \eqref{form.a} and \eqref{form.c} of $a(t,x)$ and $c(t,x)$ into \eqref{con1} and \eqref{con2},
one can deduce a second order linear system
\begin{eqnarray}
c(t)\la_1^1+\ddot{c}(t)\la_2^1+d(t)\la_3^1-\dot B_1(t)\cdot \la_4^1 &=&0,\label{sys1}\\
c(t)\la_1^2+\ddot{c}(t)\la_2^2+d(t)\la_3^2-\dot B_1(t)\cdot \la_4^2&=&0,\label{sys2}
\end{eqnarray}
where $\la_{j}^i$ $(i=1,2, j=1,2,3, 4)$ are constants depending only on $\phi(x)$, given by
\begin{equation*}
    \begin{array}{rlcrl}
      \dis  \la_1^1 &\dis= 2\int_{\R^n} (\frac{A_1}{2}+\phi)e^{-\phi}\,dx,&\qquad&
      \dis \la_1^2 &\dis= 2\int_{\R^n} (\phi^2+A_1\phi+\frac{A_2}{4})\,e^{-\phi}\,dx,\\[3mm]
       \dis  \la_2^1 &\dis= \int_{\R^n}\frac{1}{2}|x|^2\,e^{-\phi}\,dx,&\qquad&
      \dis \la_2^2 &\dis= \int_{\R^n}\frac{1}{2}|x|^2(\frac{A_1}{2}+\phi)\,e^{-\phi}\,dx,\\[3mm]
             \dis  \la_3^1 &\dis= \int_{\R^n}e^{-\phi}\,dx=1,&\qquad&
      \dis \la_3^2 &\dis= \int_{\R^n} (\frac{A_1}{2}+\phi)\,e^{-\phi}\,dx,\\[3mm]
             \dis  \la_4^1 &\dis= \int_{\R^n}xe^{-\phi}\,dx,&\qquad&
      \dis \la_4^2 &\dis= \int_{\R^n} (\frac{A_1}{2}+\phi) x \,e^{-\phi} \,dx.
    \end{array}
\end{equation*}
Moreover, note $\la_3^1=1>0$. In order to cancel $d(t)$, by multiplying \eqref{sys1} by $\la_3^2$ and then taking difference with \eqref{sys2}, it follows that
\begin{equation}\label{eq.cc}
    c(t)
    (\la_1^2-\la_1^1\la_3^2)+\ddot{c}(t)(\la_2^2-\la_2^1\la_3^2)-\dot
    B_1(t)\cdot \inner{\la_4^2-\la_4^1\la_3^2}=0,
\end{equation}
where one can compute
\begin{eqnarray}
  \la_1^2-\la_1^1\la_3^2&=& 2\left\{\int_{\R^n}\phi^2\,d\mu-\left[\int_{\R^n}\phi\,d\mu\right]^2+\frac{A_2-A_1^2}{4}\right\},\label{ceq.1}\\
\la_2^2-\la_2^1\la_3^2&=&\frac{1}{2}\left\{\int_{\R^n}|x|^2\phi\,d\mu -\int_{\R^n}|x|^2\,d\mu\,\int_{\R^n}\phi\,d\mu \right\},\label{ceq.2}\\
\la_4^2-\la_4^1\la_3^2&=&\int_{\R^n} x\phi\,d\mu-\int_{\R^n}x\,d\mu \int_{\R^n}\phi\,d\mu.\label{ceq.3}
\end{eqnarray}

Here and in the sequel, for brevity, we have used the notation
$
    d\mu=e^{-\phi(x)}\,dx.
$

\begin{lemma}\label{lem.sys}
It holds that
\begin{equation}\label{lem.sys.1}
    \la_1^2-\la_1^1\la_3^2>0,
\end{equation}
and
\begin{eqnarray}
\la_2^2-\la_2^1\la_3^2
 &=&\frac{1}{4}\iint_{\R^n\times \R^n}[\phi(x)-\phi(y)](|x|^2-|y|^2) e^{-\phi(x)-\phi(y)}\,dxdy,\label{lem.sys.2}\\
 \la_4^2-\la_4^1\la_3^2&=& \frac{1}{2}\iint_{\R^n\times \R^n} (\phi(x)-\phi(y))(x-y) e^{-\phi(x)-\phi(y)}\,dxdy.\label{lem.sys.3}
\end{eqnarray}
\end{lemma}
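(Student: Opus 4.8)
The plan is to exploit the fact that, thanks to the normalization \eqref{phi.c1}, the measure $d\mu=e^{-\phi(x)}\,dx$ is a \emph{probability} measure on $\R^n$, so that the quantities \eqref{ceq.1}--\eqref{ceq.3} are nothing but (multiples of) the variance of $\phi$ and the covariances of $\phi$ with $|x|^2$ and with the coordinate functions $x_i$, all taken with respect to $\mu$. The single ingredient needed is the elementary symmetrization identity: for $\mu$-integrable $f,g$ with $fg$ also $\mu$-integrable,
\begin{equation*}
\int_{\R^n} fg\,d\mu-\int_{\R^n} f\,d\mu\int_{\R^n} g\,d\mu=\frac12\iint_{\R^n\times\R^n}\bigl(f(x)-f(y)\bigr)\bigl(g(x)-g(y)\bigr)\,d\mu(x)\,d\mu(y),
\end{equation*}
which follows in one line by expanding the product on the right and using $\mu(\R^n)=1$. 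Applying it with $f(x)=|x|^2,\ g(x)=\phi(x)$ to \eqref{ceq.2}, and with $f(x)=x$ (component by component), $g(x)=\phi(x)$ to \eqref{ceq.3}, and then rewriting $d\mu(x)\,d\mu(y)=e^{-\phi(x)-\phi(y)}\,dx\,dy$, immediately produces \eqref{lem.sys.2} and \eqref{lem.sys.3}.

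For \eqref{lem.sys.1} I would start from \eqref{ceq.1} and split
\begin{equation*}
\la_1^2-\la_1^1\la_3^2=2\left(\int_{\R^n}\phi^2\,d\mu-\Bigl[\int_{\R^n}\phi\,d\mu\Bigr]^2\right)+\frac{A_2-A_1^2}{2}.
\end{equation*}
By the same symmetrization identity with $f=g=\phi$, the first term equals $\iint_{\R^n\times\R^n}\bigl(\phi(x)-\phi(y)\bigr)^2\,d\mu(x)\,d\mu(y)\geq0$, i.e.\ it is the (nonnegative) variance of $\phi$ under $\mu$. For the second term, recall $A_1=\int_{\R^n}|\xi|^2M\,d\xi$, $A_2=\int_{\R^n}|\xi|^4M\,d\xi$ and $\int_{\R^n}M\,d\xi=1$; Cauchy--Schwarz applied to the pair $1$ and $|\xi|^2$ in $L^2(M\,d\xi)$ gives $A_1^2\leq A_2$, with \emph{strict} inequality since $|\xi|^2$ is not $M$-a.e.\ constant. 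Hence $\la_1^2-\la_1^1\la_3^2\geq\frac12\,(A_2-A_1^2)>0$, as claimed.

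There is no deep obstacle here: once the symmetrization identity is recorded, everything is bookkeeping. The only points deserving a little care are, first, the justification that all the $\mu$-integrals that occur — in particular $\int\phi^2\,d\mu$, $\int|x|^2\,d\mu$ and $\int|x|\,d\mu$ — are finite, which is exactly where the confining/growth hypotheses on $\phi$ from Theorem \ref{thm.rate} are used so that the constants $\la_j^i$ are well defined; and second, retaining the \emph{strictness} in \eqref{lem.sys.1}, i.e.\ observing that the nonnegative variance of $\phi$ cannot cancel the strictly positive velocity-moment contribution $(A_2-A_1^2)/2$.
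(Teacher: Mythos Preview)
Your proposal is correct and is essentially the same argument as the paper's: both rely on the symmetrization identity for covariances with respect to the probability measure $d\mu=e^{-\phi}\,dx$ (and its velocity analogue with $M\,d\xi$) applied to \eqref{ceq.1}--\eqref{ceq.3}. The only cosmetic difference is that for the strict positivity of $A_2-A_1^2$ you invoke Cauchy--Schwarz, while the paper writes it as $\frac12\iint(|\xi|^2-|\xi_\ast|^2)^2MM_\ast\,d\xi\,d\xi_\ast$ via the same symmetrization; these are of course equivalent.
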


\begin{proof}
We verify \eqref{lem.sys.1} only. \eqref{lem.sys.2} and \eqref{lem.sys.3} can be proved from \eqref{ceq.2} and \eqref{ceq.3} in a similar way. In fact, it is straightforward to compute
\begin{eqnarray*}
&&\int_{\R^n}\phi^2\,d\mu-\left[\int_{\R^n}\phi\,d\mu\right]^2\\
&&=\iint_{\R^n\times \R^n} \frac{\phi^2(x)+\phi^2(y)}{2} e^{-\phi(x)-\phi(y)}\,dxdy-\iint_{\R^n\times \R^n} \phi(x)\phi(y)\,e^{-\phi(x)-\phi(y)}\,dxdy\\
&&=\frac{1}{2}\iint_{\R^n\times \R^n} [\phi(x)-\phi(y)]^2\,e^{-\phi(x)-\phi(y)}\,dxdy,
\end{eqnarray*}
and
\begin{eqnarray*}
&&A_2-A_1^2\\
&&=\int_{\R^n}|\xi|^4 M\,d\xi-\left[\int_{\R^n} |\xi|^2 M\,d\xi\right]^2\\
&&=\iint_{\R^n\times \R^n} \frac{|\xi|^4+|\xi_\ast|^4}{2}MM_\ast\,d\xi d\xi_\ast -\iint_{\R^n\times \R^n} |\xi|^2|\xi_\ast|^2MM_\ast\,d\xi d\xi_\ast\\
&&=\frac{1}{2}\iint_{\R^n\times \R^n} [|\xi|^2-|\xi_\ast|^2]^2 MM_\ast\,d\xi d\xi_\ast.
\end{eqnarray*}
Therefore, \eqref{lem.sys.1} follows in terms of \eqref{ceq.1}. This completes the proof of Lemma \ref{lem.sys}.
\end{proof}

Since $\la_1^2-\la_1^1\la_3^2>0$,  from \eqref{eq.cc}, one can write
\begin{equation}\label{cb1}
       c(t)=\Lambda_\phi\,\ddot c(t)+V_\phi\cdot\dot B_1(t),
\end{equation}
where the constant $\Lambda_\phi$ and the constant vector $V_\phi$, both depending only on $\phi(x)$, are given by
\begin{equation*}
    \Lambda_\phi=-\frac{\la_2^2-\la_2^1\la_3^2}{\la_1^2-\la_1^1\la_3^2},\quad V_\phi=\frac{\la_4^2-\la_4^1\la_3^2}{\la_1^2-\la_1^1\la_3^2}.
\end{equation*}

\medskip
\noindent {\bf Step 4.}~
Plugging  the expressions \reff{form.a} and \reff{+form.b}  of $a(t,x)$ and
$b(t,x)$  into \eqref{eq1.0},
\begin{equation}\label{11110801}
  \dot c(t)\inner{2\phi(x)+x\cdot \nabla_x \phi (x)}+\frac{1
    }{2} c^{(3)}(t) \abs x^2=\nabla_x \phi(x)\cdot B_2 x+\nabla_x \phi(x)\cdot
  B_1(t)+\ddot B_1(t)\cdot x -\dot d(t).
\end{equation}
Throughout this step, we consider the simplified case when the potential function $\phi(x)$ is even. In such case,
$\la_4^1=\la_4^2=0$ and hence $V_\phi=0$. Then, \eqref{cb1} is reduced to
\begin{equation}\label{+eq.cc}
    c(t)=\Lambda_\phi\, \ddot{c}(t).
\end{equation}
It follows from \reff{11110801} that
\begin{equation}\label{+11110801}
   \underbrace{\dot c(t)\inner{2\phi(x)+x\cdot \nabla_x \phi (x)}+\frac{1
    }{2} c^{(3)}(t) \abs x^2-\nabla_x \phi(x)\cdot B_2 x+\dot
    d(t)}_{\textrm{even function with respect to}~x}=0,
\end{equation}
and
\begin{equation}\label{++11110801}
  \underbrace{\nabla_x \phi(x)\cdot
  B_1(t)+\ddot B_1(t)\cdot x}_{\textrm{odd function with respect to}~x}=0.
\end{equation}
Due to the condition (ii) in Theorem \ref{thm.rate}, it is direct to obtain from \eqref{++11110801},
\begin{equation}\label{B10}
  \ddot B_1(t)=   B_1(t)\equiv 0
\end{equation}
for all $0\leq t\leq 1$. Moreover, note that $B_2$ is a constant matrix, then by \reff{+11110801} we have, taking the derivative with respect
to $t$,
\[
 \ddot c(t)\inner{2\phi(x)+x\cdot \nabla_x \phi (x)}+\frac{1
    }{2} c^{(4)}(t) \abs x^2+\ddot
    d(t)=0,
\]
which along with \reff{+eq.cc} give
\[
 \inner{\Lambda_\phi\inner{2\phi(x)+x\cdot \nabla_x \phi (x)}+\frac{\abs x^2
    }{2} }c^{(4)}(t)+\ddot
    d(t)=0.
\]
By using the condition (iii) for the case when $\phi(x)$ is even in Theorem \ref{thm.rate}, we conclude $c^{(4)}(t)\equiv 0$.
As a result, in view of \reff{+eq.cc}, it follows immediately that
$c(t)\equiv 0$,  and thus $d(t)\equiv 0$ by \eqref{sys1}. Then, further recalling \eqref{form.a} and \eqref{+form.b} as well as \eqref{B10}, we arrive at
\begin{equation}\label{s4-ab}
    a(t,x)\equiv 0,\quad b(t,x)=B_2x.
\end{equation}

\medskip
\noindent {\bf Step 5.}~
In this step we  prove \eqref{s4-ab} for the general case when $\phi(x)$ may not be even. In such case we shall use the other condition in (iii) in Theorem \ref{thm.rate}.
Note that \reff{11110801} still holds true and $B_2$ is independent of $t$. Then, taking the first order derivative with respect to $t$ on both sides of
\reff{11110801}, we obtain
\begin{equation}\label{ddotc}
  \ddot c(t)\inner{2\phi(x)+x\cdot \nabla_x \phi (x)}+\frac{1
    }{2} c^{(4)}(t) \abs x^2= \nabla_x \phi(x)\cdot
  \dot B_1(t)+ B_1^{(3)}(t)\cdot x +\ddot d(t).
\end{equation}

We first claim that $\ddot c(t)\equiv 0$ and $\ddot d(t)\equiv 0$.  Indeed,
the first relation in \reff{hy1} implies $c^{(4)}(t)\equiv 0$, while
the second one gives   $\ddot c(t)\equiv 0$. Moreover if
$c^{(4)}(t)\equiv 0$ then  we have,  taking derivatives twice with respect to
$t$ on both sides of  \reff{ddotc},
\begin{equation}\label{+ddotc}
    B_1^{(3)}(t)\cdot \nabla_x \phi(x)
+ B_1^{(5)}(t)\cdot x + d^{(4)}(t)=0.
\end{equation}
From the condition (ii) in Theorem \ref{thm.rate}, \eqref{+ddotc} implies
\begin{equation*}
    B_1^{(3)}(t)=B_1^{(5)}(t)\equiv 0,\quad  d^{(4)}(t)\equiv 0.
\end{equation*}
 Thus it follows from \reff{cb1} that $\ddot
c(t)\equiv 0$ and thus $\ddot d(t)\equiv 0$ due to \reff{sys1}.

The equation \reff{ddotc}  is now reduced to
\begin{equation*}
  \dot B_1(t)\cdot \na_x\phi(x)
+B_1^{(3)}(t)\cdot x=0,\quad \forall\,x\in\R^n.
\end{equation*}
Again from the condition (ii) in Theorem \ref{thm.rate}, we conclude
\[
     \dot
  B_1(t)\equiv 0,
\]
which together with  \reff{cb1} and  $\ddot
c(t)\equiv 0$  give
$
     c(t)\equiv 0,
$ and thus  $ d(t)\equiv 0$  due to \reff{sys1}. Therefore, similar to obtaining \eqref{s4-ab}, one has
$$
 a(t,x)\equiv 0,\quad b(t,x)=B_2x+B_1,
$$
where $B_1$ is constant.

\medskip
\noindent{\bf Step 6.}~
We are now going to determine $B_1=0$, $B_2=0$ and hence $b(t,x)\equiv 0$. In fact, the equation \reff{11110801} in both cases given in previous two steps is reduced to
\begin{equation}\label{b2}
  \nabla_x \phi(x)\cdot B_2 x+\nabla_x \phi(x)\cdot B_1=0,\quad \forall\,x\in \R^n.
\end{equation}
Noticing that $B_2$ is skew-symmetric, one can write
\begin{eqnarray*}
  \na_x \phi(x)\cdot B_2 x &=& \sum_{ij} B_{2ij}x_j\pa_i\phi(x)=\left\{\sum_{i<j}+\sum_{i>j}\right\} B_{2ij}x_j\pa_i\phi(x)\\
  &=&\sum_{1\leq i<j\leq n}\{B_{2ij}x_j\pa_i\phi(x)+B_{2ji}x_i\pa_j\phi(x)\}\\
   &=&-\sum_{1\leq i<j\leq
     n}B_{2ij}\{x_i\pa_j\phi(x)-x_j\pa_i\phi(x)\}\\
&=&-\sum_{\stackrel{1\leq i<j\leq n}{(i,j)\in I\setminus S_\phi}}B_{2ij}\{x_i\pa_j\phi(x)-x_j\pa_i\phi(x)\}.
\end{eqnarray*}
Therefore, \eqref{b2} together with the condition (ii) in Theorem \ref{thm.rate} yields
\begin{equation*}
   B_1=(B_{11}, \cdots, B_{1n})=0,\quad B_{2ij}=0\quad \text{for}\ 1\leq i<j\leq n, (i,j)\in I\setminus S_\phi,
\end{equation*}
which again due to the skew-symmetry of $B_2$, gives
\begin{equation}\label{B2-1}
     B_{2ij}=0\quad \text{for}\ 1\leq i\neq j\leq n, (i,j)\in I\setminus S_\phi.
\end{equation}
For $(i,j)\in S_\phi$, by plugging $b(t,x)=B_2x$ into \eqref{con.b},
\begin{eqnarray}
 0&=& \int_{\R^n}  \{x\times B_2 x\}_{ij}\,e^{-\phi(x)}\,dx\nonumber\\
 &=&\int_{\R^n} \{x_i[ B_2 x]_j-x_j[ B_2x]_i\}\,e^{-\phi(x)}\,dx\nonumber\\
  &=&\int_{\R^n} \Big\{x_i
    \sum_{\ell=1}^nB_{2j\ell} x_\ell-x_j\sum_{\ell=1}^nB_{2i\ell} x_\ell\Big\}\,e^{-\phi(x)}\,dx\nonumber\\
   &=&\int_{\R^n} \left\{x_i^2 B_{2ji} -x_j^2B_{2ij} \right\}\,e^{-\phi(x)}\,dx+\int_{\R^n} \Big\{x_i
    \sum_{\ell\neq i,j}B_{2j\ell} x_\ell-x_j\sum_{\ell\neq i,j}B_{2i\ell} x_\ell\Big\}\,e^{-\phi(x)}\,dx\nonumber\\
   &=&B_{2ji} \int_{\R^n} (x_i^2+x_j^2) \,e^{-\phi(x)}\,dx,\label{lem.p2}
\end{eqnarray}
where due to the condition (i) in Theorem \ref{thm.rate}, we have used
\begin{eqnarray*}
&&\int_{\R^n} \Big\{x_i
    \sum_{\ell\neq i,j}B_{2j\ell} x_\ell-x_j\sum_{\ell\neq i,j}B_{2i\ell} x_\ell\Big\}\,e^{-\phi(x)}\,dx\\
    &&=\int_{\R^n} \Big\{(-x_i)
    \sum_{\ell\neq i,j}B_{2j\ell} x_\ell-(-x_j)\sum_{\ell\neq i,j}B_{2i\ell} x_\ell\Big\}\,e^{-\phi(\cdots,-x_i,\cdots,-x_j,\cdots)}\,dx\\
 &&=-\int_{\R^n} \Big\{x_i
    \sum_{\ell\neq i,j}B_{2j\ell} x_\ell-x_j\sum_{\ell\neq i,j}B_{2i\ell} x_\ell\Big\}\,e^{-\phi(\cdots,x_i,\cdots,x_j,\cdots)}\,dx\\
    &&=0.
\end{eqnarray*}
Then it follows from \eqref{lem.p2} that
\begin{equation}\label{B2-2}
 B_{2ji}=-B_{2ij}=0,\quad \forall\,(i,j)\in S_\phi,
\end{equation}
since
\begin{equation*}
    \int_{\R^n} (x_i^2+x_j^2) \,e^{-\phi(x)}\,dx>0.
\end{equation*}
As a result, combining \eqref{B2-1} and \eqref{B2-2} implies $B_{2ij}=0$ for all $1\leq i,j\leq n$. Hence $B_2=0$ and
\begin{equation*}
b(t,x)=B_2x\equiv 0.
\end{equation*}
Therefore,
\begin{equation*}
    z(t,x,\xi)=\{a(t,x)+b(t,x)\cdot \xi+c(t,x)|\xi|^2\}\CM^{1/2}\equiv 0.
\end{equation*}
This proves \eqref{lem.z.de} and hence completes the proof of Lemma \ref{lem.z}.\qed

\section{Exponential rate}

In this section we use  $\comii{\cdot,~\cdot}$  and
$\norm{\cdot}$ to denote respectively
the inner product and norm in $L^2(\mathbb R^{n}_x\times\mathbb
R_\xi^n)$, while use
$\norm{\cdot}_\nu$ to denote the weighted norm with respect to $\nu=\nu(\xi)$,
i.e.,
$
\norm{\cdot}_\nu^2=\comii{\nu(\xi)\cdot,~\cdot}.
$
To the end, for brevity, $L^2_\nu$ also stands for  the weighted $L^2(\mathbb R^{n}_x\times\mathbb
R_\xi^n)$ space with norm
$\norm{\cdot}_\nu$.

Before proving Theorem \ref{thm.rate}, we give the following result on the basis of Lemma \ref{lem.z} and the property of the linearized collision operator $L$.

\begin{proposition}\label{prp.co}
Suppose that the potential function $\phi(x)$ satisfies all the conditions stated in Theorem \ref{thm.rate}. Let
$f(t,x,\xi)\in L^2\inner{[0,1];~L^2_\nu}$ be a solution of the linear Boltzmann equation \reff{req} with initial data $F_0=\CM+\CM^{1/2}f_0$ satisfying the conservation laws \eqref{con.F1}, \eqref{con.F2} and \eqref{con.F3}. Then there exists a constant $C>0$ such that
\begin{equation}\label{coer}
  \int_{0}^1 \comii{L f(s), f(s)}ds \geq C\int_{0}^1 \norm{f(s)}_{\nu}^2\,ds.
\end{equation}
\end{proposition}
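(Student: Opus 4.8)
The plan is a compactness--contradiction argument of the type used in \cite{Guo-bd,Kim}, with Lemma \ref{lem.z} providing the essential uniqueness input. First, by the coercivity estimate for $L$ recalled in the introduction, for any $f$ as in the statement we have
\[
  \int_0^1\comii{Lf(s),f(s)}\,ds\geq \la_0\int_0^1\norm{\{\FI-\FP\}f(s)}_\nu^2\,ds,
\]
so the microscopic part $\{\FI-\FP\}f$ is automatically controlled and it suffices to bound the hydrodynamic part $\int_0^1\norm{\FP f(s)}_\nu^2\,ds$ by the right-hand side of \eqref{coer}. Writing $\FP f=\{a+b\cdot\xi+c|\xi|^2\}M^{1/2}$, and noting that $\FP f$ lives in a fixed finite-dimensional subspace of $L^2_\xi$ so that $\norm{\FP f}_{L^2_\nu(\R^n_\xi)}$ is comparable, uniformly in $x$, to $(|a(x)|^2+|b(x)|^2+|c(x)|^2)^{1/2}$, the goal reduces to estimating $\int_0^1\!\int_{\R^n}(|a|^2+|b|^2+|c|^2)\,dx\,ds$.

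Suppose \eqref{coer} fails. Then there is a sequence $f^k\in L^2([0,1];L^2_\nu)$ of solutions of \eqref{req}, with initial data satisfying \eqref{con.F1}--\eqref{con.F3}, normalized so that $\int_0^1\norm{f^k(s)}_\nu^2\,ds=1$ while $\int_0^1\comii{Lf^k(s),f^k(s)}\,ds\to 0$. The displayed estimate gives $\{\FI-\FP\}f^k\to 0$ strongly in $L^2([0,1];L^2_\nu)$, whence, using $L=-\nu+K$ with $K$ bounded and $\nu\geq\nu_0>0$ together with $Lf^k=L\{\FI-\FP\}f^k$, also $Lf^k\to 0$ in the sense of distributions. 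Since $\{f^k\}$ is bounded in $L^2([0,1];L^2(\R^{2n}))$, along a subsequence $f^k\rightharpoonup f$ weakly there; as $\FP$ is bounded and $\{\FI-\FP\}f^k\to 0$ strongly, $f=\FP f$, so $f$ has the form $\{a+b\cdot\xi+c|\xi|^2\}M^{1/2}=\{ae^{\phi/2}+(be^{\phi/2})\cdot\xi+(ce^{\phi/2})|\xi|^2\}\CM^{1/2}$, i.e.\ the form \eqref{lem.z.1}, with $f\in L^2([0,1];L^2_{x,\xi})$. Passing to the limit in \eqref{req} and using $Lf^k\to 0$ shows $f$ solves the transport equation \eqref{lem.z.2} in the sense of distributions. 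Moreover, the conservation laws \eqref{dt.c1}, \eqref{dt.cij}, \eqref{dt.c2} turn \eqref{con.F1}--\eqref{con.F3} into \eqref{lem.z.c1}--\eqref{lem.z.c3} for each $f^k$ at every $t\in[0,1]$; testing against the fixed functions $\CM^{1/2}$, $(x\times\xi)_{ij}\CM^{1/2}$ with $(i,j)\in S_\phi$, and $(\tfrac12|\xi|^2+\phi)\CM^{1/2}$ (which are square-integrable over $\R^{2n}$ for the confining potentials under consideration), passing to the weak limit, and using that these moments are constant in $t$ along solutions of the transport equation, we obtain \eqref{lem.z.c1}--\eqref{lem.z.c3} for $f$. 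Hence Lemma \ref{lem.z} applies: $f\equiv 0$, and in particular $f^k\rightharpoonup 0$.

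It remains to promote this to strong convergence, since $f^k\to 0$ in $L^2([0,1];L^2_\nu)$ would contradict the normalization $\int_0^1\norm{f^k(s)}_\nu^2\,ds=1$. As $\{\FI-\FP\}f^k\to 0$ strongly already, this amounts to showing $\FP f^k\to 0$, i.e.\ $a^k,b^k,c^k\to 0$ in $L^2([0,1]\times\R^n)$. The fields $a^k,b^k,c^k$ are velocity averages of $f^k$ against fixed Schwartz functions of $\xi$, and from $\pa_t f^k+\xi\cdot\na_x f^k-\na_x\phi\cdot\na_\xi f^k=-Lf^k$ with $-Lf^k\to 0$ a velocity-averaging estimate yields, locally in $x$, a uniform bound for $a^k,b^k,c^k$ in a positive-order Sobolev space in $(t,x)$; by Rellich's theorem they are precompact in $L^2_{\mathrm{loc}}$, and combined with $f^k\rightharpoonup 0$ this forces $a^k,b^k,c^k\to 0$ in $L^2([0,1]\times B_R)$ for every $R$. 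Finally one shows that the contributions from $\{|x|\geq R\}$ are small uniformly in $k$ and then lets $k\to\infty$, $R\to\infty$. This last tightness step is the main obstacle: the velocity-averaging argument must absorb the force term $\na_x\phi\cdot\na_\xi f^k$ (handled by localizing in $x$, where $\na_x\phi$ is bounded), and the uniform smallness at spatial infinity has to be derived without an a priori $x$-weighted bound on $\{f^k\}$, exploiting the confining nature of $\phi$ to preclude escape of mass to infinity, exactly as in \cite{Guo-bd,Kim}. Once the strong convergence is established the contradiction follows, proving Proposition \ref{prp.co}.
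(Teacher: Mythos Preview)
Your overall contradiction strategy---normalize a sequence of solutions, pass to a weak limit, show the limit lies in $\ker L$, solves the transport equation, satisfies the conservation laws, and invoke Lemma~\ref{lem.z}---is exactly the paper's. The substantive difference is in how the contradiction is clinched.

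The paper does \emph{not} try to prove $\FP f^k\to 0$ strongly. Instead it exploits the decomposition $L=\nu-K$ with $K$ compact: one shows (this is the paper's Step~2, by the standard $K$-compactness/averaging argument of \cite{Guo2,Kim}) that $Kz_k\to Kz$ strongly in $L^2([0,1]\times B_r\times\R^n)$ for every fixed $r$. Writing $\int_{B_r}\!\int Kz_k\,z_k=\int_{B_r}\!\int\nu|z_k|^2-\int_{B_r}\!\int Lz_k\,z_k$, using \eqref{contra}, the normalization, and a uniform-in-$k$ spatial tightness claim, one passes to the limit and obtains $\int_0^1\comii{Kz,z}\,ds=1$. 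This immediately forces $\int_0^1\comii{Lz,z}\,ds\le 0$ (hence $z\in\ker L$) and $\int_0^1\|z\|_\nu^2\,ds=1$, so $z\not\equiv 0$---and \emph{that} is what contradicts Lemma~\ref{lem.z}. No strong convergence of $f^k$ (or of $a^k,b^k,c^k$) is ever needed.

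Your route via velocity averaging on $(a^k,b^k,c^k)$ is plausible---after localizing in $x$ you can absorb $\na_x\phi\cdot\na_\xi f^k=\na_\xi\!\cdot(\na_x\phi\,f^k)$ into the divergence-form source---but it is heavier machinery and still leaves you with the same tightness-at-infinity obstacle you flag. The paper's $K$-route is shorter: the compactness of $K$ does the averaging for you, and the contradiction comes from $z\not\equiv 0$ rather than from upgrading weak to strong convergence. Note, however, that the spatial tightness assertion (that $\int_0^1\!\int_{B_r}\!\int\nu|z_k|^2>1-\eps$ for $r\ge r_0$ uniformly in $k$) is also used in the paper's Step~3, so the obstacle you identify is shared by both arguments; it is simply stated rather than proved there.
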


\begin{proof}
We use the contradiction argument as did in
\cite{Guo2} and \cite{Kim} with a little modifications in order to treat the
whole spatial space case here instead of torus.

\medskip
\noindent{\bf Step 1.}~
Assume that the inequality \reff{coer} is
not true. Then for any $k\geq 1$ there exists a sequence of solutions
$z_k(t,x,\xi)$ such that
\[
  \int_{0}^1 \comii{L z_k(s), z_k(s)}ds < k^{-1}\int_{0}^1 \norm{z_k(s)}_{\nu}^2\,ds.
\]
Dividing both sides by the factor $\int_{0}^1 \norm{z_k(s)}_{\nu}^2\,ds$,
we may assume without loss of generality that
\begin{equation}\label{uni1}
  \int_{0}^1 \norm{z_k(s)}_{\nu}^2\,ds=1
\end{equation}
and
\bel{contra}
  \int_{0}^1 \comii{L z_k(s), z_k(s)}ds\leq k^{-1}, \quad \forall\,k\geq 1.
\enl
Note that  $\{z_k(t,x,\xi)\}_{k\geq 1}$ are  solutions to
\reff{req}, satisfying the conservation laws
\begin{eqnarray*}
&&\iint_{\R^n\times\R^n} z_k(t,x,\xi) \CM^{1/2}\,dxd\xi=0, \\
&&\iint_{\R^n\times\R^n} (x\times \xi)_{ij} z_k(t,x,\xi) \CM^{1/2}\,dxd\xi=0,\ (i,j)\in S_\phi, \\
&&\iint_{\R^n\times\R^n} \left(\frac{1}{2}|\xi|^2+\phi(x)\right)z_k(t,x,\xi) \CM^{1/2}\,dxd\xi=0,
\end{eqnarray*}
for any $0\leq t\leq 1$. By the weak compactness  in the Hilbert space $L^2\inner{[0,1]; L^2_\nu}$, we conclude that there exists
$z(t,x,\xi)\in L^2\inner{[0,1]; L^2_\nu}$ such that, up to a subsequence,
\bel{weakc}
     z_k\rightharpoonup z ~\textrm{ weakly in}~~ L^2\inner{[0,1]; L^2_\nu},
\enl
and
\[
   \int_{0}^1 \norm{z(s)}_{\nu}^2\,ds\leq 1,
\]
due to \reff{uni1}.   The weak convergence along with the above conservation laws yields that
$z$ also satisfies the same conservation laws as given in \eqref{lem.z.c1}, \eqref{lem.z.c2} and \eqref{lem.z.c3}.

\medskip
\noindent{\bf Step 2.}~
For each $r>0$,  we denote by $B_r\subset \mathbb R_x^n$ the ball with
center $0$ and radius $r$. In this step we claim   that for any fixed
$r$,
\bel{strongc}
K z_k\rightarrow K z  ~~\textrm{strongly in}~~L^2([0,1]\times
B_r\times \mathbb R^n).
\enl
The proof is nearly the same as  given in \cite{Guo2} and
\cite{Kim}, with $B_r$ instead of the torus there.  We omit  the details for brevity.

\medskip
\noindent{\bf Step 3.}~
We prove
\bel{eq1}
1-\int_{0}^1 \comii{Kz(s), z(s)}ds=0.
\enl
To do so, firstly we have,
using  the strong convergence \reff{strongc} and weak
convergence \reff{weakc},
\begin{equation}\label{conv}
   \int_{0}^1 \int_{B_r}\int_{\mathbb R^n} Kz(s,x,\xi) z(t,x,\xi)
   \,d\xi \,dx \,ds=\lim_{k\rightarrow +\infty} \int_{0}^1 \int_{B_r}\int_{\mathbb R^n} Kz_k(s,x,\xi) z_k(t,x,\xi)
   \,d\xi \,dx \,ds.
\end{equation}
Recall \eqref{uni1}. Then, for any  $\eps>0$,  we can find a constant $r_0$ depending only on $\eps$ but
independent of $k$, such  that
\[
     \forall~r\geq r_0,\quad \int_{0}^1 \int_{B_{r}}\int_{\mathbb R^n}
    \nu(\xi) \abs{z_k(s,x,\xi)}^2 \,d\xi \,dx\,ds  >1-\eps.
\]
This, together with \reff{contra} and the relation
\begin{eqnarray*}
   \int_{0}^1 \int_{B_r}\int_{\mathbb R^n} Kz_k(s,x,\xi) z_k(t,x,\xi)
   \,d\xi \,dx \,ds &=&\int_{0}^1 \int_{B_{r}}\int_{\mathbb R^n}
    \nu(\xi) \abs{z_k(s,x,\xi)}^2 \,d\xi \,dx\,ds\\
 &&-\int_{0}^1 \int_{B_r}\int_{\mathbb R^n} Lz_k(s,x,\xi) z_k(t,x,\xi)
   \,d\xi \,dx \,ds,
\end{eqnarray*}
implies
\begin{eqnarray*}
   \forall~r\geq r_0,\quad \int_{0}^1 \int_{B_r}\int_{\mathbb R^n} Kz_k(s,x,\xi) z_k(t,x,\xi)
   \,d\xi \,dx \,ds \geq  1-\eps-\frac{1}{k}.
\end{eqnarray*}
Note that  the number $r_0$
is independent of $k$. Then letting  $k\rightarrow+\infty$ we
have, by virtue of   \reff{conv},
\bel{below}
   \forall~r\geq r_0,\quad \int_{0}^1 \int_{B_r}\int_{\mathbb R^n} Kz(s,x,\xi) z(t,x,\xi)
   \,d\xi \,dx \,ds\geq  1-\eps.
\enl
Moreover the  identity
\[
 \int_{0}^1 \comii{Kz_k(s), z_k(s)}ds= \int_{0}^1 \norm{z_k(s)}_{\nu}^2
 ds-\int_{0}^1 \comii{Lz_k(s), z_k(s)}ds
\]
as well as \reff{uni1} and the positivity of $L$  give
\[
   \int_{0}^1 \int_{B_r}\int_{\mathbb R^n} Kz_k(s,x,\xi) z_k(t,x,\xi)
   \,d\xi \,dx \,ds \leq \int_{0}^1 \comii{Kz_k(s), z_k(s)}ds\leq 1,
\]
and thus
\begin{equation}\label{upp}
    \forall~r>0,\quad \int_{0}^1 \int_{B_r}\int_{\mathbb R^n} Kz(s,x,\xi) z(t,x,\xi)
   \,d\xi \,dx \,ds\leq 1
\end{equation}
due to \reff{conv}.   This along with \reff{below} shows that
\[
    \lim_{r\rightarrow+\infty} \int_{0}^1 \int_{B_r}\int_{\mathbb R^n} Kz(s,x,\xi) z(t,x,\xi)
   \,d\xi \,dx \,ds=1.
\]
As a result,  using the Lebesgue convergence theorem due to  \reff{upp},  we obtain
the desired estimate \reff{eq1}.

\medskip
\noindent{\bf Step 4.}~
We show
\begin{equation*}
   z(t,x,\xi)=\{a(t,x)+b(t,x)\cdot \xi+c(t,x)|\xi|^2\}\CM^{1/2}\ \text{and}\ z\not\equiv 0.
\end{equation*}
In fact, using the positivity of $L$ and the
fact that
\[
 \int_{0}^1 \comii{Lz(s), z(s)}ds = \int_{0}^1 \norm{z(s)}_{\nu}^2
 \,ds-\int_{0}^1 \comii{Kz(s), z(s)}ds \leq  1-\int_{0}^1 \comii{Kz(s), z(s)}ds=0
\]
due to \reff{eq1}, one has
\[
     \int_{0}^1 \comii{Lz(s), z(s)}ds=0,  \quad    \int_{0}^1 \norm{z(s)}_{\nu}^2
 \,ds=\int_{0}^1 \comii{Kz(s), z(s)}ds=1.
\]
Therefore, $z\not\equiv 0$, and
\[
     \{{\bf I}-{\bf P}\}z(t,x,\xi)=0
\]
for almost every $(t,x,\xi)\in[0,1]\times\mathbb R^{2n}$, by the coercivity estimate
\[
\int_{0}^1  \comii{Lz(s), z(s)} ds \geq C \int_{0}^1\|\{\FI-\FP\}z\|_\nu^2\,ds.
\]
Thus $z={\bf P}z=\{\tilde a(t,x)+\tilde b(t,x)\cdot \xi+\tilde c(t,x)|\xi|^2\}M^{1/2}=\{a(t,x)+b(t,x)\cdot \xi+c(t,x)|\xi|^2\}\CM^{1/2}$.

\medskip
\noindent{\bf Step 6.}~
We prove  that $z$ satisfies in the sense of distribution the transport equation
\[
 \pa_t z+\xi\cdot\na_x z-\na_x\phi\cdot \na_\xi z=0.
\]
For this purpose, for any $\psi\in
C_0^\infty\inner{[0,1]\times \mathbb R^n\times\mathbb R^n}$, we have
\begin{eqnarray*}
  -\iiint z_k \pa_t\psi \,d\xi\,dx\,dt -\iiint   z_k
  \xi\cdot\na_x \psi \,d\xi\,dx\,dt+\iiint  z_k
  \na_x\phi\cdot \na_\xi \psi \,d\xi\,dx\,dt&&\\
+\iiint \psi \nu(\xi) z_k   \,d\xi\,dx\,dt-\iiint \psi K z_k   \,d\xi\,dx\,dt&&  =0,
\end{eqnarray*}
since $z_k$ is a solution to \reff{req}.  On the other hand,  from the weak convergence
\reff{weakc} and strong convergence \reff{strongc},   it follows that, as $ k\rightarrow +\infty $,
\begin{multline*}
 -\iiint z_k \pa_t\psi \,d\xi\,dx\,dt -\iiint   z_k
  \xi\cdot\na_x \psi \,d\xi\,dx\,dt+\iiint  z_k
  \na_x\phi\cdot \na_\xi \psi \,d\xi\,dx\,dt\\
  +\iiint \psi \nu(\xi) z_k
  \,d\xi\,dx\,dt
 \rightarrow -\iiint z \pa_t\psi \,d\xi\,dx\,dt -\iiint   z
  \xi\cdot\na_x \psi \,d\xi\,dx\,dt\\
  +\iiint  z
  \na_x\phi\cdot \na_\xi \psi \,d\xi\,dx\,dt+\iiint \psi \nu(\xi) z
  \,d\xi\,dx\,dt,
\end{multline*}
and
\[
   \iiint \psi K z_k   \,d\xi\,dx\,dt\rightarrow \iiint \psi K z   \,d\xi\,dx\,dt.
\]
Combining the above relations, we conclude, for any  $\psi\in
C_0^\infty\inner{[0,1]\times \mathbb R^n\times\mathbb R^n}$,
\begin{eqnarray*}
  -\iiint z \pa_t\psi \,d\xi\,dx\,dt -\iiint   z
  \xi\cdot\na_x \psi \,d\xi\,dx\,dt+\iiint  z
  \na_x\phi\cdot \na_\xi \psi \,d\xi\,dx\,dt&&\\
+\iiint \psi \nu(\xi) z   \,d\xi\,dx\,dt-\iiint \psi K z   \,d\xi\,dx\,dt&&  =0.
\end{eqnarray*}
Moreover note  $z\in\ker L=\ker (\nu-K)$  in view of the conclusion of the
previous step.  Then the above
equation becomes
\begin{eqnarray*}
  \iiint z (t,x,\xi)\inner{-\pa_t-\xi\cdot\na_x +\na_x\phi\cdot \na_\xi} \psi(t,x,\xi)  \,d\xi\,dx\,dt =0,
\end{eqnarray*}
that is,  $z$ satisfies in the sense of distribution the equation
$\inner{\pa_t+\xi\cdot\na_x -\na_x\phi\cdot \na_\xi}z=0$. This together with the conservation laws \eqref{lem.z.c1}, \eqref{lem.z.c2} and \eqref{lem.z.c3} imply $z(t,x,\xi)\equiv 0$ by Lemma \ref{lem.z}, which is a contradiction with $z\not\equiv 0$. Then the proof
of Proposition \ref{prp.co} is complete.
\end{proof}

\begin{proof}[Proof of Theorem \ref{thm.rate}]
  Let $\lambda<1$ be a small number to be determined later, and let
  $f$ be a solution  to
  \reff{req} with initial data $F_0=\CM+\CM^{1/2}f_0$ satisfying the conservation laws \eqref{con.F1}, \eqref{con.F2} and \eqref{con.F3}.  Firstly we
  have the equation for $e^{\lambda t} f $:
\[
   \inner{ \pa_t +\xi\cdot\na_x -\na_x\phi\cdot \na_\xi+L} (e^{\lambda
     t} f )=\lambda e^{\lambda t} f.
\]
Then for any   $N\in\mathbb Z_+$,  one has
\begin{eqnarray*}
\int_0^N \comii{\inner{ \pa_s +\xi\cdot\na_x -\na_x\phi\cdot \na_\xi+L} (e^{\lambda
     s}f) ,~e^{\lambda s} f} ds= \lambda \int_0^N e^{2\lambda s}\norm{ f(s)}^2\,ds.
\end{eqnarray*}
This implies
\[
     e^{\lambda N} \norm{f(N)}^2+2\int_0^N  e^{2\lambda s}\comii{Lf(s), ~f(s)}\,ds=\norm{f(0)}^2+2 \lambda \int_0^N e^{2\lambda s}\norm{ f(s)}^2\,ds,
\]
since $\xi\cdot\na_x -\na_x\phi\cdot \na_\xi$ is skew-adjoint in
$L^2(\mathbb R_x^n\times\mathbb R^n_\xi)$.  We may rewrite the above
equation as
\begin{multline}\label{energy2}
 e^{\lambda N} \norm{f(N)}^2+2\sum_{k=0}^{N-1}\int_0^1  e^{2\lambda (s+k)}\comii{Lf_k(s),
    ~f_k(s)}\,ds\\
 =\norm{f(0)}^2+2\lambda\sum_{k=0}^{N-1}  \int_0^1 e^{2\lambda (s+k)}\norm{ f_k(s)}^2\,ds,
\end{multline}
where $f_k(s,x,\xi)\stackrel{\rm def}{=}f(s+k,x,\xi)$, $k=0,\cdots,
N-1$.  Note that $f_k$ satisfies the linear Boltzmann equation in the
interval $[0, 1]$. Then we use Proposition \ref{prp.co} to get
\begin{multline*}
    2\sum_{k=0}^{N-1}\int_0^1  e^{2\lambda (s+k)}\comii{Lf_k(s),
    ~f_k(s)}\,ds\geq 2 \sum_{k=0}^{N-1}e^{2\lambda k}\int_0^1  \comii{Lf_k(s),
    ~f_k(s)}\,ds\\
    \geq 2 C \sum_{k=0}^{N-1}e^{2\lambda k}\int_0^1  \norm{f_k(s)}^2\,ds,
\end{multline*}
where we have used the fact that $\nu_0\norm{\cdot}\leq \norm{\cdot}_\nu$
since  $\nu=\nu(\xi)$ has a positive lower bound $\nu_0$. On the other hand,
\[
   2\lambda\sum_{k=0}^{N-1}  \int_0^1 e^{2\lambda (s+k)}\norm{
     f_k(s)}^2\,ds\leq 2 e^{2\lambda }\lambda\sum_{k=0}^{N-1} e^{2\lambda k} \int_0^1 \norm{ f_k(s)}^2\,ds.
\]
As a result, we can choose $\lambda>0$ sufficiently small such that
\begin{eqnarray*}
    2\sum_{k=0}^{N-1}\int_0^1  e^{2\lambda (s+k)}\comii{Lf_k(s),
    ~f_k(s)}\,ds-2\lambda\sum_{k=0}^{N-1}  \int_0^1 e^{2\lambda (s+k)}\norm{
     f_k(s)}^2\,ds\\  \geq 2\inner{ C- e^{2\lambda }\lambda}
   \sum_{k=0}^{N-1} e^{2\lambda k} \int_0^1 \norm{ f_k(s)}^2\,ds
   \geq 0.
\end{eqnarray*}
This together with \reff{energy2} imply
\bel{uppb}
    e^{\lambda N} \norm{f(N)}^2\leq \norm{f(0)}^2.
\enl
Now, for any $t\geq 0$, we can find an integer $N_t$ such that $0\leq
N_t\leq t\leq N_t+1$. Using the relation
\begin{eqnarray*}
\int_{N_t}^t \comii{\inner{ \pa_s +\xi\cdot\na_x
       -\na_x\phi\cdot \na_\xi+L} f ,~ f} ds= 0,
\end{eqnarray*}
we get the $L^2$ energy estimate for the solution $f$ on the interval
$[N_t,t]$:
\[
   \norm{f(t)}^2+2\int_{N_t}^t  \comii{Lf(s), ~f(s)}\,ds=\norm{f(N_t)}^2.
\]
This, along with the positivity of $L$, yields
\[
   e^{\lambda t} \norm{f(t)}^2\leq e^{\lambda t} \norm{f(N_t)}^2=
   e^{\lambda (t-N_t)} e^{\lambda N_t}\norm{f(N_t)}^2\leq e\norm{f(0)}^2,
\]
where the last inequality follows from \reff{uppb} and the fact that
$e^{\lambda (t-N_t)}\leq e^{\lambda}\leq e $ since
$\lambda<1$. The desired estimate \eqref{thm.r1} follows by choosing $\si=\la/2$ and $C=\sqrt{e}$. This completes the proof of Theorem \ref{thm.rate}.
\end{proof}

\noindent{\bf Acknowledgements}: RJD was supported by a General
Research Fund from RGC of Hong Kong, and WXL was supported by the NSF of China (No. 11001207).


\begin{thebibliography}{99}

\bibitem{CIP-Book}
C. Cercignani, R. Illner and  M. Pulvirenti, {\it The Mathematical
Theory of Dilute Gases}, Applied Mathematical Sciences, {\bf 106}.
Springer-Verlag, New York, 1994.


\bibitem{DV}
L. Desvillettes and C. Villani, On the trend to global equilibrium
for spatially inhomogeneous kinetic systems: The Boltzmann equation.
{\it Invent. Math.} {\bf 159} (2) (2005), 245--316.


\bibitem{DMS}
J. Dolbeault, C. Mouhot and C. Schmeiser, Hypocoercivity for linear kinetic equations conserving mass, to appear in {\it Trans. Amer. Math. Soc.} (2011), arXiv:1005.1495v1.



\bibitem{D-Hypo}
R.J. Duan, Hypocoercivity of linear degenerately dissipative
kinetic equations, {\it Nonlinearity} {\bf 24} (2011), no. 8, 2165--2189.


\bibitem{Duan-Torus}
R.J. Duan, Stability of the Boltzmann equation with potential
forces on Torus,
 {\it Physica D: Nonlinear Phenomena} {\bf 238} (2009), 1808--1820.


\bibitem{Guo-bd}
Y. Guo, Decay and continuity of the Boltzmann equation in bounded domains, {\it Arch. Ration. Mech. Anal.}  {\bf 197}  (2010),  no. 3, 713--809.


\bibitem{Guo-IUMJ}
Y. Guo, The Boltzmann equation in the whole space, {\it Indiana
Univ. Math. J.} {\bf 53} (2004), 1081--1094.


\bibitem{Guo2}
Y. Guo, The Vlasov-Poisson-Boltzmann system near Maxwellians, {\it
Comm. Pure Appl. Math.} {\bf 55} (9) (2002), 1104--1135.


\bibitem{H-AA}
F. H\'{e}rau, Hypocoercivity and exponential time decay for the
linear inhomogeneous relaxation Boltzmann equation, {\it Asymptot.
Anal.} {\bf 46} (2006),  349--359.

\bibitem{HN}
F. H\'{e}rau and  F. Nier, Isotropic hypoellipticity and trend to
equilibrium for the Fokker-Planck equation with a high-degree
potential, {\it Arch. Ration. Mech. Anal.} {\bf 171} (2004),
151--218.

\bibitem{H-JFA}
F. H\'{e}rau, Short and long time behavior of the Fokker-Planck
equation in a confining potential and applications, {\it J. Funct.
Anal.} {\bf 244} (2007),  95--118.

\bibitem{Ka-BE13}
S. Kawashima, The Boltzmann equation and thirteen moments, {\it
Japan J. Appl. Math.} {\bf 7} (1990), 301--320.

\bibitem{Kim}
C. Kim, Boltzmann equation with a large potential in a periodic box, preprint. arXiv:1102.4002v1.

\bibitem{MN}
C. Mouhot and L. Neumann, Quantitative perturbative study of
convergence to equilibrium for collisional kinetic models in the
torus, {\it Nonlinearity} {\bf 19} (4) (2006), 969--998.


\bibitem{SG}
R.M. Strain and Y. Guo, Exponential decay for soft potentials near
Maxwellian,  {\it Arch. Ration. Mech. Anal.}  {\bf 187}  (2008),
287--339.

\bibitem{Ukai-1974}
S. Ukai, On the existence of global solutions of mixed problem for
non-linear Boltzmann equation, {\it Proc. Japan Acad.} {\bf 50} (1974), 179--184.


\bibitem{Vi}
C. Villani, Hypocoercivity,  {\it Memoirs Amer. Math. Soc.},
{\bf 202} (2009), iv+141.


\end{thebibliography}

\end{document}